\newtheorem{defi}{\bf D\scriptsize EFINITION \normalsize}
\newtheorem{theorem}{\bf T\scriptsize HEOREM \normalsize}
\newtheorem{lm}{\bf L\scriptsize EMMA \normalsize}
\newtheorem{dk}{\bf C\scriptsize OROLLARY \normalsize}
\newtheorem{rem}{\bf R\scriptsize EMARK \normalsize}
\newtheorem{exa}{\bf E\scriptsize XAMPLE \normalsize}
\newtheorem{pro}{\bf P\scriptsize ROBLEM \normalsize}
\newtheorem{prop}{\bf P\scriptsize ROPOSITION \normalsize}
\newtheorem{no}{\bf N\scriptsize OTE \normalsize}
\newenvironment{remark}{\begin{rem}\rm}{\end{rem}}
\def\kopr{\hfill\raisebox{3pt}{\framebox{$\star$}}}
\newenvironment{lemma}{\begin{lm}\it}{\end{lm}}
\begin{document}

\title{Genericity of distributional chaos in non-autonomous systems}

\author{Francisco Balibrea}
\address{Department of Mathematics, University of Murcia, Espinardo, 30100 Murcia, Spain}
\author{Lenka Ruck\'a$^1$}
\address{Mathematical Institute, Silesian University in Opava, Na Rybnicku 1, 74601 Opava, Czechia}

\footnote{corresponding author}
{\hskip 115mm  June 11, 2024 
\vskip 5mm}


\email{balibrea@um.es}
\email{lenka.rucka@math.slu.cz}

\maketitle

\pagestyle{myheadings}
\markboth{{\sc F. Balibrea, L. Ruck\'a}}
{{\sc Genericity of distributional chaos in non-autonomous systems}}


\begin{abstract} 
In this paper we solve two open problems concerning distributional chaos in non-autonomous discrete dynamical systems stated in \cite{bss} and \cite{marta}. In the first problem it is wondered if the limit function of pointwise convergent non-autonomous system with positive topological entropy is DC2. We show that the answer to this problem depends on the given metric and can be both, positive or negative. In  the second open problem it is wondered if to be DC1 is a generic property of pointwise convergent non-autonomous systems. We prove that the answer is negative for convergent systems on the Cantor set. Concerning interval systems, we show that DC1 chaotic systems form dense, but not open (nor closed) set in the space of non-autonomous convergent systems on the interval, independently of the metric we use. 

\end{abstract}

\bigskip
\bigskip
\section{Introduction}

In modeling of real phenomena by dynamical systems, we can obtain models containing parameters described by whole families of maps, depending on time. Such models are generally challenging to handle, however after discretizing the time in the system, we deal with pairs $(X, f_{1,\infty})$ where $X$ is a space, $f_{1,\infty} = \{f_{n}\}_{n=1}^{\infty}$ and all $f_n$ are continuous maps acting on $X$. We obtain what is referred to in the literature as a non-autonomous discrete dynamical system. Such systems often have useful applications. One interesting example is a non-autonomous system $([-1,1], (T_{u_n})_{n=1}^{\infty})$ from \cite{silva}, composed of tent maps $T(x)$, cut by a constant function $u_{n} \in [-1,1]$. Such a system has been applied to problems in cardiology and telecommunications \cite{gar}, marketing \cite{he}, or population development \cite{hil}.

\medskip
One line of research deals with understanding the dynamics of non-autonomous systems and tries to extend results known for dynamical systems to non-autonomous case. This was begun by Kolyada and Snoha in \cite{kolyadasnoha} with topological entropy of two dimensional triangular discrete systems and afterwards this topic got widely studied, see for example \cite{paco}, \cite{balopr}, \cite{can} and the references therein. Since general non-autonomous system is difficult to work with, it is often required to be convergent. Most of the known results are for uniformly convergent systems on the interval. For example it is known that positive topological entropy of non-autonomous system is equivalent to positive topological entropy of its limit map (\cite{kolyadasnoha, bss}). For distributional chaos and Li-Yorke chaos it is known that chaoticity of the limit function implies chaoticity of the system \cite{marta}, but not the other way around \cite{can, dvor}. 
\medskip

On the other hand, so far only few relations are known for strictly convergent systems, see e.g. \cite{balopr}. In this paper we present results concerning distributional chaos and topological entropy for such non-autonomous systems, answering two open problems published in \cite{bss} and \cite{marta}. Our main result states that distributional chaos is generic (typical) property for convergent non-autonomous systems on the interval, while generic convergent system on the Cantor set is not distributionally chaotic. The analogous results concerning infinite topological entropy was published in \cite{bss}.
\medskip

\section{Preliminaries}
\label{prel}
\medskip

An {\it autonomous} discrete dynamical system ({\it ads}) is a pair $(X, f)$ of a compact metric space $X$ and con\-ti\-nu\-ous map $f:X \to X$ acting on it. Space $X$ is equipped with metric $d$. The trajectory of point $x \in X$ is a sequence $x, f(x), f^2(x), ...$, where $f^n(x) = f ^{n-1} \circ f(x)$ ($f^0$ is the identity map), denotes the $n$-th iterate of $f$. By $C(X)$ we denote space of all continuous maps from $X$ into itself, by $C_s(X)$ the set of all continuous and surjective maps. We use notation $I$ for real  compact unit interval, $M$ for compact manifold and $X$ for general compact space. \\

This paper deals with so called {\it non-autonomous} discrete dynamical systems ({\it nads} for short). It consists of a space $X$ (again, equipped with metric $d$) and a system of functions $f_{1, \infty}=\{f_i\}_{i=1}^\infty$, acting on it. We need all maps $f_i$ to be continuous and surjective. In the case of non-autonomous system, the trajectory of point $x \in X$ is the sequence $x, f_1(x), f_2\circ f_1(x), f_3 \circ f_2 \circ f_1(x), ...$. The $n$-th iterate $f_{1,\infty}^n(x)$ in this case is point $f_{n} \circ f_{n-1} \circ ... \circ f_2 \circ f_1(x)$. If system $f_{1, \infty}$ is convergent, we denote its limit map by $f$. The reason to suppose surjective maps is because if one of them, say $f_i$, was non-surjective, then the map $f_{i+1}$ could not be defined in all $X$. The assumption of surjectivity is sometimes extended to the possible limit function, although it is not necessary. \\

As well as in the case of {\it ads}, we can study dynamical behavior of {\it nads}. In this paper we solve problems focusing on distributional chaos and topological entropy. For definition of distributional chaos for autonomous systems we refer to \cite{3versions}, topological entropy for autonomous systems was studied and described in many papers and books, see e.g. \cite{alm}. Corresponding definitions for the setting of non-autonomous systems are very similar. We use definition of topological entropy from \cite{kolyadasnoha, balopr} and definition of distributional chaos from \cite{dvor, marta}. For wider background we refer to those papers. \\

Given positive integer $n$ and pair of points $x,y$, we denote 
\begin{equation}
\rho_n(x,y)=\max_{i=0,...,n-1} d(f_{1, \infty}^i(x), f_{1,\infty}^i(y)).
\label{pte}
\end{equation} 
A set $E \in X$ is called $(n,\varepsilon)-$separated, if $\rho_n(x,y)>\varepsilon$ for all pairs $x \neq y \in E$. Let $s_n(f_{1, \infty}, \varepsilon)$ be the maximal cardinality of $(n, \varepsilon)-$separated set. Finally the {\it topological entropy} of {\it nads} $f_{1, \infty}$ is defined as
\begin{equation}
h(f_{1, \infty})=\lim_{\varepsilon \to 0} \limsup_{n \to \infty} \frac1n \log (s_n(f_{1, \infty}, \varepsilon)).
\end{equation}
The property of positive topological entropy of a system is often referred to as PTE.\\

Let $\varepsilon>0$. For points $x,y \in X$ we define distributional functions $\psi_{xy}, \psi_{xy}^*: (0, \infty) \to [0,1]$ as follows:
\begin{equation}
\psi_{xy}(t):=\liminf_{n \to \infty} \frac1n \#\{0 \leq j <n; d(f_{1, \infty}^j(x), f_{1,\infty}^j(y))<t\} ~\hspace{1cm}~{\rm and}
\end{equation}
\begin{equation}
\psi^*_{xy}(t):=\limsup_{n \to \infty} \frac1n \#\{0 \leq j <n; d(f_{1, \infty}^j(x), f_{1, \infty}^j(y))<t\}.
\end{equation}
We call $\psi_{xy}$ the lower distributional function of $f_{1, \infty}$, $\psi_{xy}^*$ the upper distributional function of $f_{1, \infty}$ and as in the autonomous case, these are non-decreasing with $\psi_{xy}(t) \leq \psi_{xy}^*(t)$ for all $t$. 
\medskip

We say that pair of points $x,y$ form a DC1-scrambled pair of $f_{1, \infty}$, if 
\begin{equation}
\psi^*_{xy}(t) \equiv 1 \hspace{1cm} {\rm and} \hspace{1cm}  \psi_{xy}(t)=0 ~~~~ {~~~~\rm for~ some~~~~}~~~~ t>0.
\label{dc}
\end{equation}
The {\it nads} $f_{1, \infty}$ is called {\it distributionally chaotic of type 1} (or simply DC1), if there exists an uncountable subset $S \subseteq X$, where all pairs of points are DC1-scrambled pairs. \\

If distributional functions only satisfy 
\begin{equation}
\psi^*_{xy}(t) \equiv 1 \hspace{1cm} {\rm and} \hspace{1cm}  \psi_{xy}(t) <\psi^*_{xy}(t)  ~~~~ {~~~~\rm for~all~~~~}~~~~ t>0, 
\end{equation}
system $f_{1,\infty}$ is {\it distributionally chaotic of type 2} (DC2). There is also chaos DC3, which is even weaker than DC2, but we do not need the exact definition. For more information see \cite{3versions}. \\

The non-autonomous system $f_{1, \infty}=\{f_n\}_{n=1}^\infty$ is called {\it equicontinuous}, if
\begin{equation}
 \forall \varepsilon~~ \exists \delta>0 {\rm ~~such~~that~~} \forall n \in \mathbb{N} {\rm ~~ and ~~} \forall x,y \in X, ~~ \rho(x,y)<\delta \implies \rho(f_n(x), f_n(y))<\varepsilon. 
\label{equicontinuity}
\end{equation}
\smallskip

We also recall notions of convergence. It is said that {\it nads} $f_{1, \infty}$ converges to $f$ in metric $\rho$, if
\begin{equation}
\forall \varepsilon ~~\exists N {\rm ~~such~~ that~~} \forall n>N, \rho(f_{n}, f)<\varepsilon.
\label{1}
\end{equation}
System $\{f_{1, \infty}\}$ converges pointwise (with respect to metric $d$ on space $X$), if
\begin{equation}
\forall x ~~~ \forall \varepsilon ~~\exists N {\rm ~~such~~ that~~} \forall n>N, d(f_{n}(x), f(x))<\varepsilon.
\label{2}
\end{equation}
And finally it converges uniformly, if
\begin{equation}
\forall \varepsilon ~~\exists N {\rm ~~such~~ that~~} \forall n>N, d(f_{n}(x), f(x))<\varepsilon \hspace{.2cm} \forall x \in X.
\label{3}
\end{equation}
\smallskip

At last, we say that property $P$ is {\it generic} (or typical) for elements of some compact metric space, if the set of elements possessing this property is residual (complement of countable union of nowhere dense sets). In the setting of compact metric spaces, genericity is equivalent to being dense and open (see \cite{bss}).

\bigskip

\section{Open problem 1}
\label{oppr1}

The problem was stated in \cite{marta} and \cite{bss}. It was inspired by the Downarowicz's result, saying that if $X$ is compact metric space and $f \in C(X)$, if $h(f)>0$, then the system $(X, f)$ is DC2-chaotic (\cite{down}).\\

{\bf Problem:}  Assume $(X, f_{1,\infty})$ has positive topological entropy and $f_{1, \infty}$ converges pointwise to map $f$ in $C_s(X)$. Is it DC2? The conjecture in \cite{marta} is, that for {\it nads} on interval, the limit map must have a DC2 pair. \\

The answer is known to be positive for uniformly convergent systems $f_{1, \infty}$. It was proved in \cite{kolyadasnoha} that for uniform convergence, $h(f_{1, \infty})>0$ implies $h(f)>0$. And by Downarowicz's result in \cite{down}, map $f$ is DC2 chaotic. That is why authors of \cite{marta}, \cite{bss} situated the problem in the case of pointwise convergence. 
\medskip

The answer to the problem depends on the chosen metric. In both papers (\cite{marta}, \cite{bss}) is considered space of all continuous and surjective maps $C_s(X)$, together with supremum metric $\rho_{{\rm sup}}$, such that 
\begin{equation}
\rho_{{\rm sup}}(f,g) = \sup_{x \in X} d(f(x), g(x)), 
\label{rhosup}
\end{equation}
where $f, g \in C_s(X)$ and $d$ is a metric on $X$. What authors of \cite{marta}, \cite{bss} overlooked, is that with such metric, there is no difference between pointwise and uniform convergence. All sequences convergent in $\rho_{{\rm sup}}$ converges uniformly. To see this, check the definitions in the end of section \ref{prel}. Definition (\ref{1}) together with (\ref{rhosup}) gives
\begin{equation}
\forall \varepsilon ~~\exists N {\rm ~~such~~ that~~} \forall n>N, \sup_{x \in X} d(f_{n}(x), f(x))<\varepsilon.
\end{equation}
Hence $d(f_{n}(x), f(x))<\varepsilon$ in all points $x \in X$, which implies (\ref{3}), the uniform convergence of $f_n$.\\

Consequently the answer to the problem when supposing ($C_s(X), \rho_{{\rm sup}}$) is positive. The choice of supremum metric is obvious, it ensures continuity and surjectivity of the limit map, while pointwise convergence itself is not enough for that. If, anyway, we equip the space with different metric, the answer to the open problem might be negative. Suppose metric $\rho$ on $C_s(X)$ which distinguishes between uniform and pointwise convergence in the sense that there exist $\rho$-convergent sequences of maps, which are converging pointwise with respect to metric $d$ on $X$ (i.e. satisfy (\ref{2})), but are not uniformly convergent (do not satisfy (\ref{3})). The negative answer, in such case, follows by the result in \cite{balopr}. Authors in this paper proved, that for arbitrary continuous interval map $f$ there is a {\it nads} $(I, f_{1, \infty})$ with infinite topological entropy converging (pointwise) to $f$. So to get a negative answer to our problem it is enough to consider map $f$ with zero topological entropy, hence not DC2, and use construction from \cite{balopr} to create non-autonomous system with infinite topological entropy, converging to $f$. 
\medskip

In \cite{balopr} is constructed a system $f_{1, \infty}$ having infinite entropy. Now we use a simplified version of \cite{balopr} obtaining a system having PTE, shown in Figure \ref{picture1}. 

\begin{figure}[h]
\includegraphics[scale=0.8]{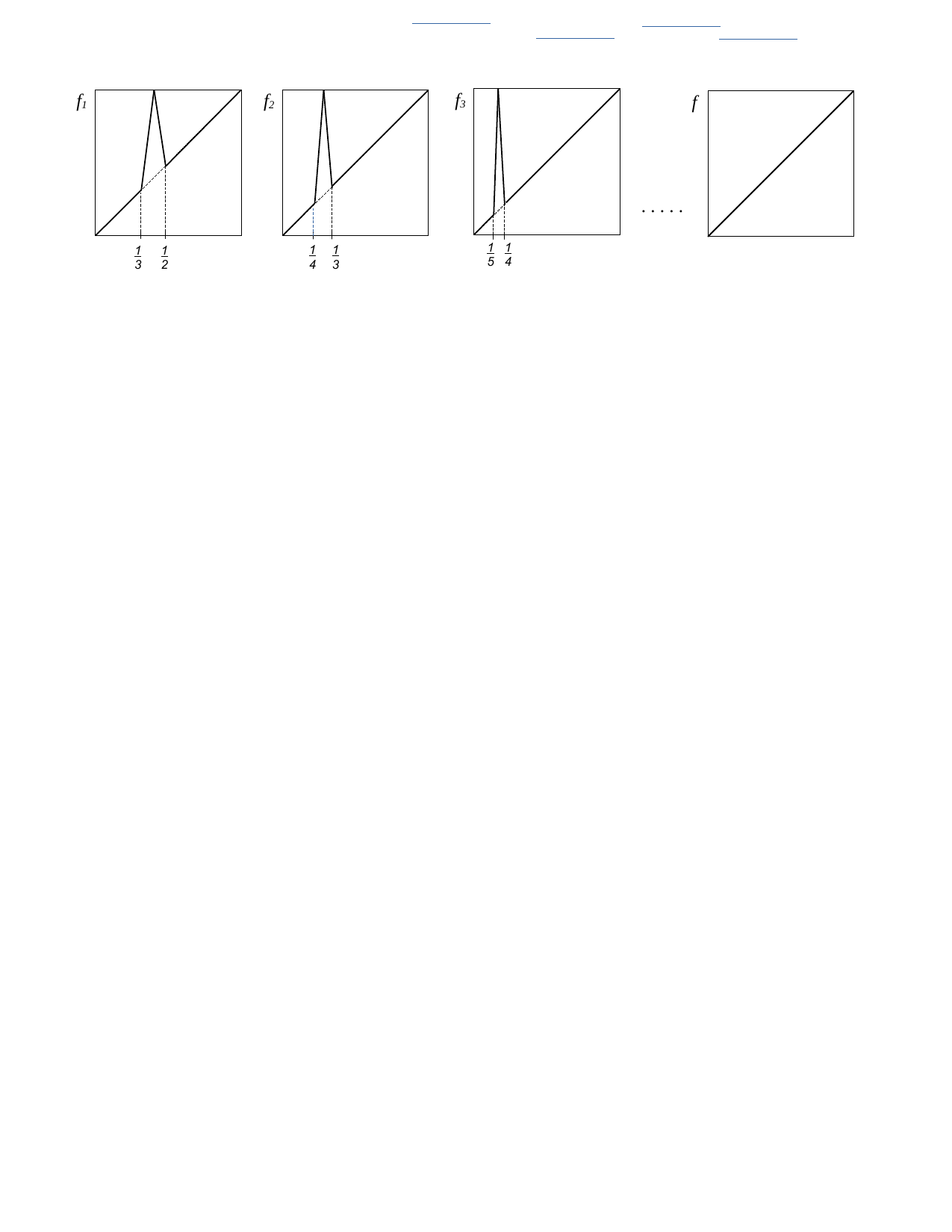} 
\caption{Non-autonomous system on the interval with PTE converging pointwise to the Identity function.}
\label{picture1}
\end{figure}

For completeness of the previous scheme, in \cite{rouyer} it was proved that if $S \subset X$ is generic with respect to a property, then $S$ is perfect and totally disconnected. Therefore $S$ is a Cantor set.
\medskip

Note, that now we suppose a metric which distinguishes between pointwise and uniform convergence. The pointwise limit of the system is the Identity map, which is not DC2. On the other hand the topological entropy of the system $f_{1, \infty}$ is positive. To see this, it is enough to imagine first few iterations of $f_{1, \infty}$ (in Figure \ref{2nd} there is graph of $f_2 \circ f_1)$. Second iteration ($f_2 \circ f_1$) has 4 peaks, for third one it is $3 \cdot 4+1=13$ and generally $n-$th iteration has 3 times plus one peaks more than the previous one. Since topological entropy is global chaotic behavior, it does not matter that the peaks are moving to the left all the time and $(n, \varepsilon)$-separated sets in (\ref{pte}) consists from different points every iteration. The important thing is that its cardinality is growing sufficiently fast.

\begin{figure}[h]
\includegraphics[scale=0.8]{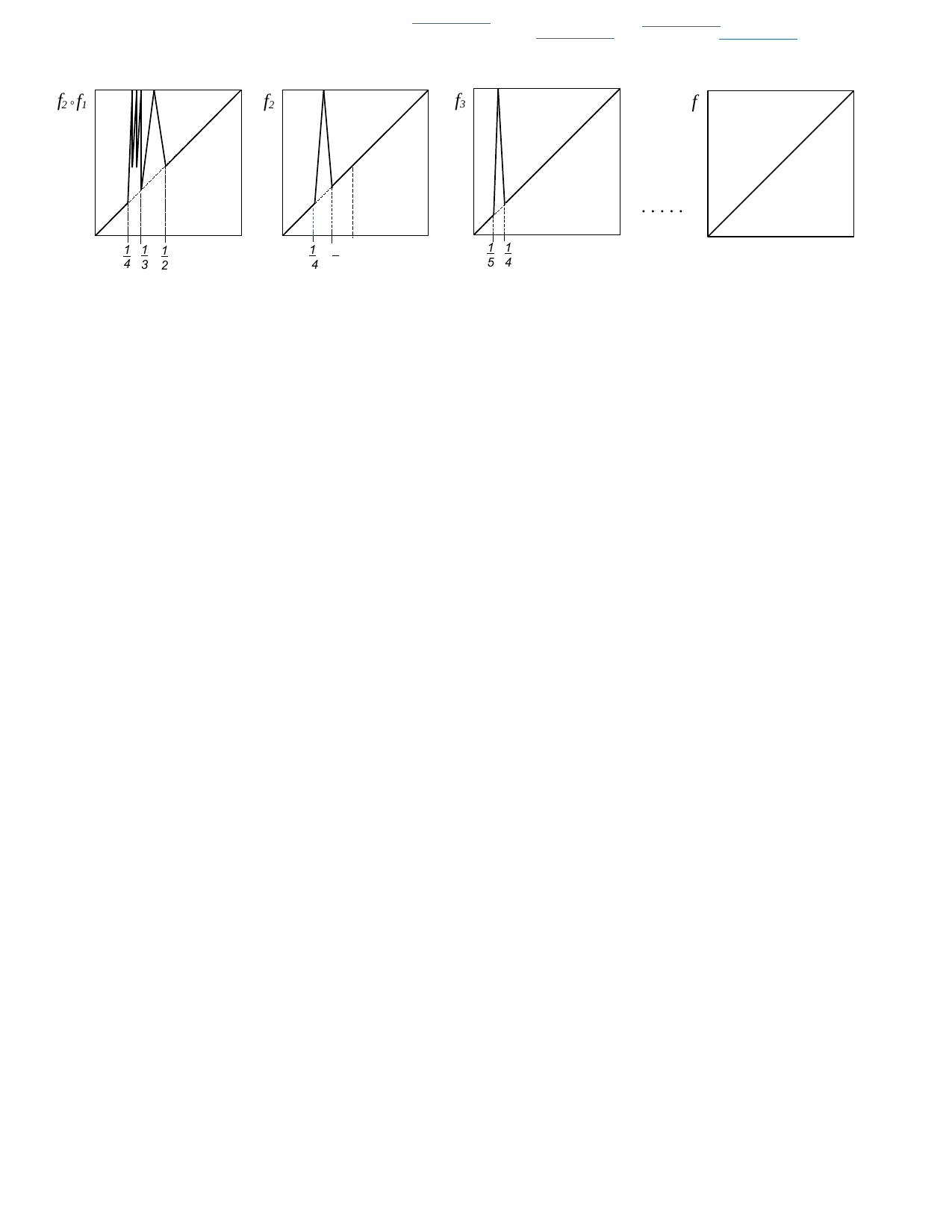} 
\caption{Second iterate of system $f_{1, \infty}$ from Figure \ref{picture1}}
\label{2nd}
\end{figure}

Sequence from Figure \ref{picture1} can be also used as an example of a {\it nads} on the interval which has PTE, but is not DC1 (or DC2, DC3), which is not possible in autonomous case, or for uniformly convergent {\it nads}. To see that $f_{1, \infty}$ is not distributionally chaotic (of any type), notice that all points of the system are eventually fixed. Indeed, points from interval $[\frac12, 1]$ are fixed points of $f_{1, \infty}$. Points from interval $[\frac13, \frac12]$ become fixed after applying map $f_1$, points from interval $[\frac14, \frac13]$ become fixed after applying $f_2 \circ f_1$ etc.  Thus $f_{1, \infty}$ contains no distributionally chaotic pairs. \\

For completeness, the opposite implication, DC1 $\implies$ PTE, is not true even in the case of uniformly convergent interval systems. It was shown by Dvořáková in \cite{dvor}. 

\smallskip
\bigskip

\section{Open problem 2}

Paper \cite{bss} deals with generic properties (systems possessing this property form residual set) of non-autonomous systems. Its setting is the same like in the first problem. $(X, d)$ is a compact metric space and the {\it nads} $f_{1,\infty}$ consists of maps $f_n$ from $C_s(X)$, equipped with metric $\rho_{{\rm sup}}$. Moreover, by $\mathcal{F}(X)$ is denoted space of all non-autonomous systems and it is equipped with metric $\rho_{\mathcal{F}}(f_{1, \infty} ,g_{1, \infty}) = \sup_{n}\rho_{{\rm sup}}(f_n,g_n)$. In particular,
\begin{equation}
\rho_{\mathcal{F}}(f_{1, \infty},g_{1, \infty}) = \sup_{n \in \mathbb{N}} \sup_{x \in X} \rho (f_n(x),g_n(x)).
\label{metric}
\end{equation}

There are two main results in \cite{bss}. First one says that infinite topological entropy is generic property for uniformly convergent and equicontinuous non-autonomous systems on interval $I$. The second result states that on the other hand, generic non-autonomous system on a Cantor set has zero topological entropy. Paper \cite{bss} asks following open problem.
\bigskip

{\bf Problem:} On which spaces it hold that DC1 is a generic property? For uniformly convergent and equicontinuous systems, the answer is positive. What about pointwise convergent {\it nads}?\\
\smallskip

Let us denote the set of all convergent {\it nads} on $X$ by $\mathcal{F}_p(X)$. Like in the problem from section \ref{oppr1}, with the supremum metrics $\rho_{\rm sup}$ and $\rho_{\mathcal{F}}$ there is no strictly pointwise convergence. Therefore result from \cite{bss} about genericity of interval {\it nads} with infinite topological entropy is true only for uniformly convergent systems. In $\mathcal{F}_p(I)$, systems with $h(f_{1, \infty})=\infty$ form dense, but not open set. We will see this later, when dealing with distributional chaos. 
\medskip

Genericity of uniformly convergent systems with infinite topological entropy on the interval cannot be generalized to analogous result for DC1 chaotic systems. The difference is caused by the fact that unlike system with $h(f_{1, \infty})>0$ converges uniformly to map $f$ with $h(f)>0$, DC1 chaotic system $f_{1, \infty}$ can converge uniformly to not DC1 limit map (see \cite{dvor}), even on the interval. 
\medskip

It should also be noted, that if we change any of the metrics $\rho_{\rm sup}$ or $\rho_{\mathcal{F}}$ and study strictly pointwise convergent systems, we need to deal with some problems. Space $\mathcal{F}(X)$ equipped with $\rho_{\mathcal{F}}$ is complete (see \cite{bss}), which arises from using uniform metrics on compact spaces. Completeness is important when dealing with generic systems. It is easy to see that if we change any of the two metrics $\rho_{\rm sup}, \rho_{\mathcal{F}}$, space $\mathcal{F}(X)$ is not necessarily complete. In such case we need to specifically demand continuity of all limit maps. Surjectivity of the limit maps is not necessary for the proofs.
\medskip

Results in this section are the following: In Theorem \ref{generic} we show that DC1 {\it nads} are dense in $\mathcal{F}_p(I)$ when we use arbitrary metric. On the other hand, in Lemma \ref{cantor} it is shown that generic {\it nads} on the Cantor set $Q$ is not distributionally chaotic of any type. In Lemma \ref{open} we show that the space of all DC1 chaotic convergent {\it nads} on $I$ is neither open, nor closed.
\medskip

After the auxiliary Remark \ref{aux} we prove our main results.

\begin{remark}
Consider space $C_s(I)$ and let $\varrho$ be a set of all possible metrics on it. Split $\varrho$ into 2 disjoint sets, $\varrho_1$ and $\varrho_2$. Let $E_\varepsilon \subset I$ denote a set with Lebesgue measure less then $\varepsilon$. In $\varrho_1$ we put such metrics $\rho$, for which $\varepsilon \to 0$ implies $\rho(f|_{E_\varepsilon}, g|_{E_\varepsilon} )\to 0$ for all $E_\varepsilon\}$ and for all $f, g \in C_s(I)$. Those metrics never assign positive value to a one-point set and they distinguish between uniform and pointwise convergence (in the meaning that any sequence convergent in $\rho$ can converge either poitwise or uniformly with respect to metric $d$). In $\varrho_1$ there is for example integral metric $\rho_{\rm int}(f,g)=\int_{0}^1 d(f(x), g(x)) dx$.
\medskip

Let $\varrho_2:=\varrho \setminus \varrho_1$. For those metrics there can be sequence of $\{E_\varepsilon\}$ with $\varepsilon \to 0$ such that $\rho(f|_{E_\varepsilon}, g|_{E_\varepsilon} )=c>0$. All sequences of maps converging in $\rho \in \varrho_2$ are uniformly convergent with respect to metric $d$ on $X$. Indeed, for any strictly pointwise convergent sequence $\{f_n\} \to f$ we have
\begin{equation} 
\exists ~\varepsilon>0~~ \forall n ~~~\exists y ~{\rm ~~such~~ that~~} d(f_n(y), f(y))>\varepsilon,
\end{equation}
which means $\rho(f_n,f)>\varepsilon$ for some $\varepsilon>0$, hence $f_n$ is not convergent in $\rho$. The representative member of $\varrho_2$ is metric $\rho_{\rm sup}$. \\
\label{aux}
\end{remark}

\begin{theorem}
Distributionally chaotic systems are dense in $\mathcal{F}_p(I)$ (with continuous limit function), independently of the metric we use.
\label{generic}
\end{theorem}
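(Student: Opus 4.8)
The plan is to show that every $g_{1,\infty}\in\mathcal{F}_p(I)$ with continuous limit $g$ can be approximated, within any prescribed $\delta>0$ in the metric $\rho_{\mathcal F}$, by a distributionally chaotic system $f_{1,\infty}$ that still converges to a continuous limit. The first observation is that distributional chaos is an \emph{asymptotic} (tail) property: the functions $\psi_{xy},\psi^*_{xy}$ are defined through $\liminf$ and $\limsup$ of Cesàro averages, so they are unaffected by altering finitely many iterates. Hence I would set $f_n=g_n$ for all $n\le N$ and place the chaotic behaviour only in the tail $n>N$. Writing $h=f_N\circ\cdots\circ f_1$, the orbit of $x$ at times $j>N$ is $\tilde f^{\,j-N}(h(x))$, where $\tilde f$ denotes the tail system, so a scrambled set of $\tilde f$ lifts through the surjection $h$ (choosing one preimage per point of an uncountable subset of it) to a scrambled set of $f_{1,\infty}$. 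Since $g_{1,\infty}\to g$, for $n>N$ large enough closeness of $f_n$ to $g$ yields closeness to $g_n$; this reduces the theorem to constructing, for a given continuous $g$ and $\delta>0$, a tail $\tilde f$ with $\rho(f_n,g)<\delta$ for all $n$, converging to a continuous limit, and carrying an uncountable DC1-scrambled set.

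For the core construction I would inject chaos into a single small subinterval $J\subset I$ and keep the maps essentially equal to $g$ elsewhere. On $J$ I fix a Cantor set $C$, code its points by infinite binary sequences, and then schedule the maps $f_n$ so that the induced symbolic dynamics realises distributional chaos. Using the freedom of non-autonomy, I alternate long \emph{agreement} time-blocks, during which the maps uniformly collapse the relevant fibres and bring all coded points within an arbitrarily small distance, with long \emph{separation} blocks, during which the maps hold points of distinct codes at distance at least a fixed $t_0>0$. Choosing an uncountable family of codes and letting the block lengths grow so that each block type dominates the Cesàro average along a suitable subsequence forces, for every pair $x\ne y$ of the family, $\psi^*_{xy}(t)\equiv 1$ (from the agreement blocks, which push the fraction of close times to $1$) and $\psi_{xy}(t_0)=0$ (from the separation blocks, which push that fraction to $0$). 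This is exactly the DC1 condition (\ref{dc}). Convergence of the tail to the continuous limit $g$ is obtained by letting the \emph{support} (for $\varrho_1$) or the \emph{amplitude} (for $\varrho_2$) of the perturbation shrink along the blocks, while the approach distances tend to $0$ and the threshold $t_0$ stays fixed.

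The two metric classes of Remark \ref{aux} are then accommodated separately. If $\rho\in\varrho_1$, the metric ignores modifications on sets of small Lebesgue measure, so I confine all chaotic activity to a set $E_\varepsilon$ of arbitrarily small measure and keep $\rho(f_n,g)<\delta$ irrespective of amplitude; shrinking the supports to measure zero gives pointwise convergence to $g$ with continuous limit. If $\rho\in\varrho_2$, convergence is forced to be uniform by Remark \ref{aux}, so the perturbations must be uniformly small; here I keep the orbits inside a $\delta$-tube around the orbits of $g$ and work with a threshold $t_0\ll\delta$, which is legitimate because DC1 only requires $\psi_{xy}(t_0)=0$ for \emph{some} $t_0>0$. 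The accumulation of many vanishing per-step perturbations over a long block still separates coded points by $t_0$, in the spirit of the infinite-entropy construction of \cite{balopr} and of the DC1 system with non-DC1 uniform limit of \cite{dvor}.

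The main obstacle is precisely the uniform case $\rho\in\varrho_2$: reconciling genuine DC1 with uniform convergence to a possibly dynamically trivial limit $g$ (for instance the identity or a constant), where a single step can move points by only a vanishing amount. The resolution is the non-autonomous accumulation phenomenon already exploited in \cite{balopr}: although $\rho(f_n,g)\to0$, the compositions $f_{1,\infty}^j$ need not converge to $g^{\,j}$, so over ever-longer separation blocks the cumulative effect of vanishing perturbations sustains a fixed separation $t_0$, while over ever-longer agreement blocks it produces arbitrarily tight approaches. Making the two families of block lengths grow at compatible rates so that \emph{both} $\psi^*\equiv1$ and $\psi(t_0)=0$ hold simultaneously for every pair of the uncountable scrambled set is the delicate quantitative point; once the block schedule is fixed, verifying the densities and the continuity of the limit is routine.
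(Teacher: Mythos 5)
Your proposal is a program rather than a proof: it defers the decisive step at exactly the point where the real difficulty sits. The alternating-block scheduling that you call "the delicate quantitative point" is in fact the routine part (block lengths growing fast enough that each block dominates all previous time is standard in DC constructions); what is genuinely hard, and what the proposal never does, is to exhibit maps $f_n$ realizing the agreement/separation behaviour while staying $\delta$-close to the \emph{given} system $g_{1,\infty}$, remaining continuous and surjective, and converging to a continuous limit. Two concrete obstacles show this is not routine. In the $\varrho_2$ (uniform) case the amplitudes $a_n=\rho_{\rm sup}(f_n,g)$ must tend to $0$, so a fixed separation $t_0$ can be sustained over proportion-one time windows only if the scrambled orbits stay where the limit dynamics $g$ is not contracting: in a region where $g$ is $\lambda$-Lipschitz with $\lambda<1$ one has $d_{j+1}\le\lambda d_j+2a_j$, which forces $d_j\to 0$ and kills $\psi_{xy}(t_0)=0$. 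Surjectivity of $g$ rules out global contraction, but confining the (approximately $g$-driven) orbits of an uncountable set to non-contracting regions, for an \emph{arbitrary} continuous limit, is a substantial construction that neither you nor your cited sources supply — \cite{dvor} builds one system with one specific limit, and \cite{balopr} produces entropy, not distributional chaos. In the $\varrho_1$ case the tension is worse: pointwise convergence with fixed-amplitude perturbations forces every point to escape the shrinking supports eventually, while DC1 needs uncountably many orbits to keep meeting them with the right asymptotic frequencies. The paper's own Figure \ref{picture1} is exactly a "moving shrinking windows" system of the kind you describe: it has positive entropy, yet it has no distributionally scrambled pair of any type, because every orbit is eventually fixed. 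So the mechanism you sketch can genuinely fail, and nothing in the proposal rules that failure out.

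The paper's proof shows that these difficulties can be bypassed rather than solved. For $\rho\in\varrho_2$ it constructs nothing: convergence is automatically uniform, infinite-entropy systems are dense among uniformly convergent interval systems by \cite{bss}, and uniform convergence plus positive entropy implies DC1 by \cite{marta} and \cite{ruette}. For $\rho\in\varrho_1$ it uses a reduction that your proposal misses entirely: given $f_n\to f$ pointwise, damp each $f_n$ toward $f$ by a convex combination so that $d(g'_n(x),f(x))=2^{-n}d(f_n(x),f(x))$; the damped system converges \emph{uniformly}, and by Egorov's theorem the damping is a small change of each map in any $\varrho_1$ metric (it alters things substantially only on a set of small Lebesgue measure). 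This reduces the pointwise case to the uniform case, where the known results finish the job. If you want to rescue your direct construction, the natural repair is this same reduction: first replace the given system by a nearby uniformly convergent one, and only then introduce the chaos.
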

\smallskip

\begin{proof}
Let first suppose $\mathcal{F}_p(I)$ with metric $\rho_{\mathcal{F}} (f_{1, \infty}, g_{1, \infty})=\sup_{n \in \mathbb{N}} \rho(f_n, g_n)$. For metrics  $\rho \in \varrho_2$, every convergent {\it nads} converges uniformly (with respect to $d$). Since for uniformly convergent interval systems, PTE $\implies$ DC1 (Theorem B in \cite{marta} + Corollary 6.27 in \cite{ruette} + Lemma 3 in \cite{bss}), density of DC1 systems follows by the genericity of systems with infinite topological entropy from \cite{bss}.
\medskip

Therefore in the rest of the proof, we consider metric $\rho \in \varrho_1$. Take {\it nads} $f_{1, \infty}$, where $f_n \to f$ pointwise and $f$ is continuous. To prove the density of DC1 systems, we must find for every $\varepsilon>0$ new system $g_{1, \infty}$ which is distributionally chaotic and such that $\rho(f_n, g_n)<\varepsilon$ for every $n$.
\medskip

First, we find auxiliary system $g'_{1, \infty}$ where $\rho(g'_n, f_n)<\varepsilon/2$ for all $n$, and finally DC1 chaotic system $g_{1,\infty}$, where $\rho(g'_n, g_n)<\varepsilon/2$ for all $n$. It makes $\rho(g_n, f_n)<\varepsilon$. The trick is to choose auxiliary system $g'_{1, \infty}$ such that it converges uniformly. Then we can easily find distributionally chaotic system $g_{1,\infty}$, like in the case of uniform convergence.
\medskip

Set $g'_1 \equiv f_1$ and $g'_n$ for $n>N$ is linear combination between $f_n$ and $f$, such that in all $x$,
\begin{equation}
d(g'_n(x),f(x)) = \frac{1}{2^n} d(f_n(x),f(x)).
\label{d}
\end{equation}
Number $N \in \mathbb{N}$ depends on the system and it will be specified later. In Figure \ref{illustration} there is the illustration of construction of $g'_{1, \infty}$ for one arbitrary system $f_{1, \infty}$ (and $N=1)$.

\begin{figure}[h]
\includegraphics[scale=0.7]{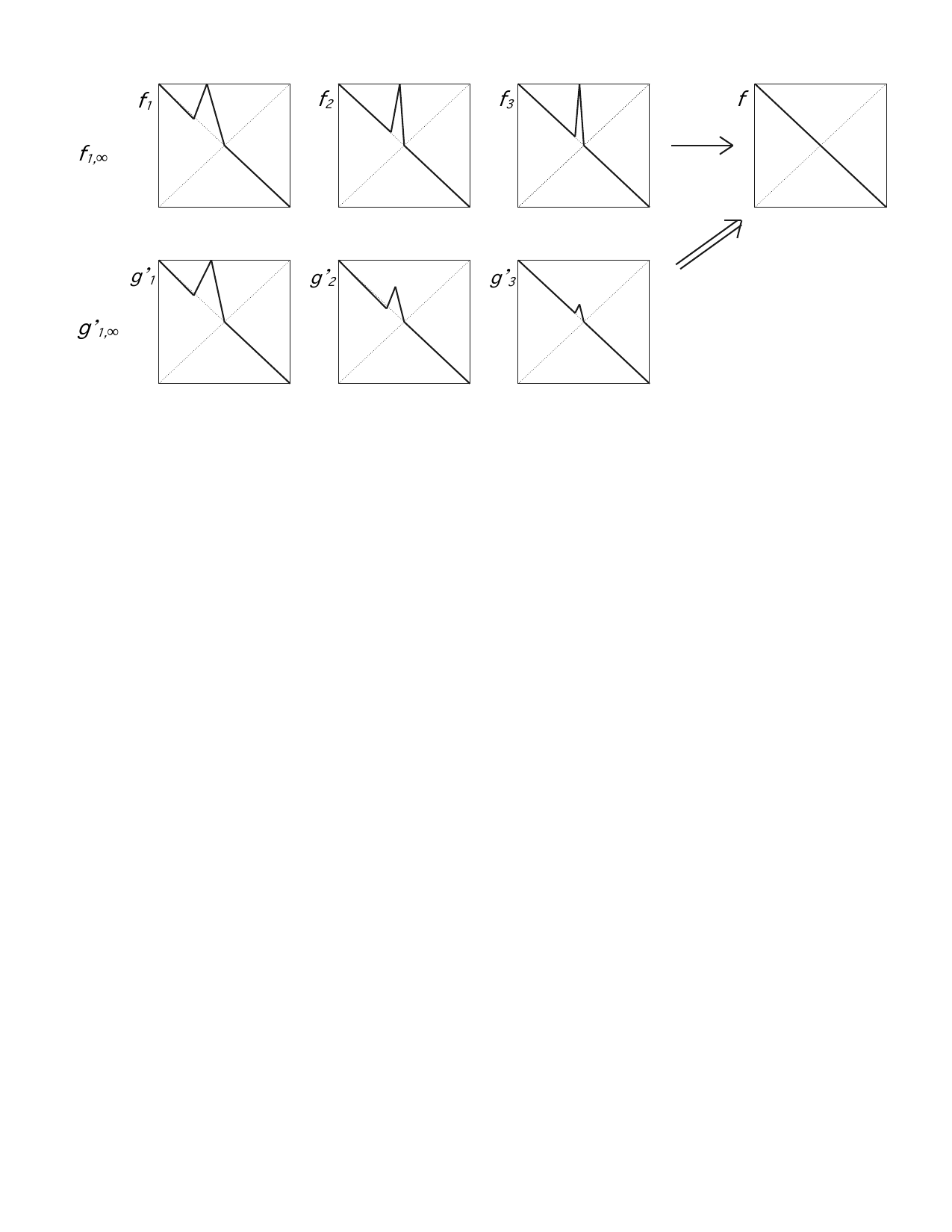}
\caption{Illustration of construction of $g'_{1,\infty}$.}
\label{illustration}
\end{figure}

It is obvious, that
\begin{itemize}
\item all $g'_n$ are continuous and surjective (as linear combinations of continuous and surjective maps),
\item $g'_{1, \infty}$ converges to $f$ ($f_n$ converges to $f$ and $d(g'_n(x), f(x)) \leq d(f_n(x),f(x))$ for all $n$ and $x$),
\item $g'_{1, \infty}$ converges uniformly (all the maps including $f$ are continuous, therefore the distances in (\ref{d}) are shrinking uniformly in all points).
\end{itemize}
\medskip

It remains to show that for our given $\varepsilon$, $\rho(g'_n, f_n)<\frac{\varepsilon}{2}$ for all $n$. First we prove it for $n>N$. System $\{f_n\}$ converges to $f$ pointwise, but by the Egorov's theorem (e.g. \cite{real}), on the large set of points, the convergence is also uniform. In particular, for every $\delta$ there is set $E_\delta$ with $\lambda(E_\delta)<\delta$ such that $f_n$ converges uniformly on $I \setminus E_\delta$. Since by (\ref{d}), $d(g'_n(x), f_n(x))\leq d(f_n(x), f(x))$ for all $x\in I$ and all $n$, we use the same Theorem on system $\{\rho(g'_n, f_n)\}$. Put $\frac{\varepsilon}{2}=\varepsilon'+ \delta$. We get 
\begin{equation}
\forall \delta>0 ~~\exists E_\delta,~ \lambda(E_\delta)<\delta~~ \forall \varepsilon'>0 ~~\exists N~~ \forall n>N,~ d(f_n(x), g'_n(x))<\varepsilon'~~ \forall x \in I \setminus E_\delta
\label{*}
\end{equation}
$$ ({\rm uniform ~~convergence~~ on~~ } I \setminus E_\delta) {\rm ~~and} $$
\begin{equation}
\forall n>N, ~~d(f_n(x), g'_n(x))<1 ~~~~\forall x \in E_\delta {\rm ~~(pointwise~~ convergence~~ on~~ } E_\delta).
\label{**}
\end{equation}
By (\ref{*}) and by the fact that $\rho \in \varrho_1$ we get $\rho(g'_n|_{(I \setminus E_\delta)}, f_n|_{(I \setminus E_\delta)})<\varepsilon' \cdot \lambda(I\setminus E_\delta)<\varepsilon'$. By (\ref{**}), $\rho(g'_n|_{E_\delta}, f_n|_{E_\delta})<1 \cdot \lambda(E_\delta)<\delta$. Together, $\rho(g'_n, f_n)<\varepsilon'+ \delta = \frac{\varepsilon}{2}$ for all $n>N$. Notice that number $N$ was chosen for particular $\varepsilon$, so we can use it as $N$ in (\ref{d}).
\medskip

Finally, let $g'_n \equiv f_n$ for all $n \leq N$. First finite members of $\{g'_n\}$ does not affect its convergence, while $\rho(g'_n, f_n)<\frac{\varepsilon}{2}$. Consequently $\rho_{\mathcal{F}}(f_{1, \infty}, g_{1, \infty})$, which finishes the proof.
\end{proof}
\smallskip

Actually, we think stronger result is true. We think DC1, as well as infinite topological entropy are dense properties not only for convergent non-autonomous systems, but for all {\it nads} on the interval, even for those with not necessarily surjective maps.  We are not able to give a proof, so we present it as an open problem. \\

{\bf Open problem:}  Are DC1 and/or infinite topological entropy systems dense in $\mathcal{F}(I)$ ?\\

We proceed with the next result.

\begin{lemma}
Generic {\it ads} and generic convergent {\it nads} defined on Cantor set $Q$ are not distributionally chaotic.
\label{cantor}
\end{lemma}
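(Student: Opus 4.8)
The plan is to prove the statement by constructing, around any given system, a dense family of systems which fail to be distributionally chaotic, and then showing that these non-chaotic systems actually form a residual (indeed dense $G_\delta$) set. Since on a compact metric space genericity is equivalent to being dense and open (as recalled in the Preliminaries), it suffices to exhibit the set of non-distributionally-chaotic systems as dense and open, or to produce an open dense set of such systems. I will treat the autonomous case first and then lift it to convergent \textit{nads}, since the convergent case converges to an \textit{ads} limit map on $Q$ and much of the structure is inherited from the limit.

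\textbf{Key steps.} First I would fix the structure of the Cantor set $Q$ and recall that maps in $C_s(Q)$ can be approximated arbitrarily well (in $\rho_{\mathcal F}$, hence in $\rho_{\rm sup}$) by maps that are locally constant on a fine clopen partition, i.e. maps factoring through a finite quotient. The essential observation is that such \emph{eventually periodic} or \emph{equicontinuous} maps cannot support distributional chaos: if a map (or convergent system) is equicontinuous, then for every pair $x,y$ the orbit distances $d(f^j_{1,\infty}(x),f^j_{1,\infty}(y))$ cannot oscillate between near $0$ and bounded-away-from-$0$ along a positive-density sequence, which is exactly what a DC-scrambled pair requires. Second, I would show that the collection of systems admitting such a finite clopen structure (so that the system is equicontinuous, or all points are eventually periodic) is dense in $\mathcal F_p(Q)$: given $f_{1,\infty}$ and $\varepsilon>0$, choose a clopen partition of $Q$ of mesh small enough that collapsing each piece to a point moves every $f_n$ and the limit $f$ by less than $\varepsilon$, and define $g_n$ to respect this partition. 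Third, for openness I would argue that equicontinuity, once secured with a definite modulus on a fixed partition, is stable under $\rho_{\mathcal F}$-perturbations smaller than the gap between partition pieces, so a whole ball around $g_{1,\infty}$ consists of systems whose orbit separations are controlled and which therefore admit no scrambled pair.

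\textbf{Main obstacle.} The delicate point is the passage from equicontinuity (or the finite-quotient structure) to the genuine \emph{absence} of any DC1/DC2/DC3 scrambled pair for the whole convergent \textit{nads}, not merely for the limit map. Uniform convergence to an equicontinuous limit gives equicontinuity of the system, but I must verify that the distributional functions $\psi_{xy}$ and $\psi^*_{xy}$ cannot split: concretely, for an equicontinuous system the function $n\mapsto d(f^n_{1,\infty}(x),f^n_{1,\infty}(y))$ is essentially controlled by the quotient dynamics, which on a finite set is eventually periodic, so the indicator counts in the definitions of $\psi_{xy},\psi^*_{xy}$ stabilize and force $\psi_{xy}(t)=\psi^*_{xy}(t)$ for every $t$, ruling out DC1, DC2 and DC3 simultaneously. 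Handling the non-autonomous time-dependence carefully here, and making the openness argument robust to the fact that perturbations may destroy the exact finite-quotient structure while preserving a uniform modulus of equicontinuity, is where I expect the real work to lie.

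Finally, I would assemble these pieces: the dense set of equicontinuous (finite-quotient) systems consists of non-chaotic systems, and each such system has a neighborhood of non-chaotic systems, giving an open dense set of non-distributionally-chaotic \textit{nads}; by the equivalence recalled in the Preliminaries this shows that the generic convergent \textit{nads} on $Q$ is not distributionally chaotic, and the same argument specialized to constant sequences $f_n\equiv f$ yields the statement for generic \textit{ads}.
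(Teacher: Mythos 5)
Your reduction of genericity to ``dense and open'' is exactly where the proof breaks: the set of non-distributionally-chaotic systems on the Cantor set is dense but it is \emph{not} open, so no version of your openness step can work. Respecting a single fine clopen partition does not control the dynamics \emph{inside} the pieces of that partition. Concretely, let $Q=\{0,1\}^{\mathbb N}$ with the usual metric and let $g_m(x_1x_2x_3\dots)=x_1\dots x_m\, x_{m+2}x_{m+3}\dots$ (delete the $(m+1)$st coordinate). Each $g_m$ is a continuous surjection lying within $2^{-m}$ of the identity, and it maps every cylinder of length $m$ onto itself, so it has precisely the finite-quotient structure you describe; yet on each such cylinder $g_m$ is topologically conjugate to the full shift, hence DC1. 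Thus every $\rho_{\mathcal F}$-ball around the identity (the most equicontinuous system there is) contains DC1 systems, and your claim that ``a whole ball around $g_{1,\infty}$ consists of systems whose orbit separations are controlled'' is false. The underlying error is the assumption that a modulus of equicontinuity for all iterates survives perturbations smaller than the partition gap: what survives is only the quotient structure \emph{at that one scale}, which is compatible with full-blown chaos inside the pieces. (Your side remark that uniform convergence to an equicontinuous limit yields an equicontinuous system fails for the same reason --- composition errors accumulate; the paper avoids this by making the tail of the approximating system exactly constant.)

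The correct route --- and it is the one the paper takes, importing it from Darji--D'Aniello \cite{darji} and from \cite{bss} --- is to exhibit the non-chaotic systems not as an open dense set but as a superset of a countable intersection of open dense sets: for each $k$, the set $U_k$ of systems admitting a clopen partition of mesh $<1/k$ whose pieces are bijectively permuted by the maps of the system is open (here your gap argument is valid) and dense (by the adding-machine construction of \cite{darji}; note that your ``locally constant'' approximants have finite range and therefore do not even lie in $C_s(Q)$ --- one needs partition-permuting homeomorphisms). No single $U_k$ excludes chaos, as the example above shows, but a system in $\bigcap_k U_k$ is equicontinuous as a family of \emph{compositions} --- two points once trapped in the same piece of the mesh-$1/k$ partition stay within $1/k$ of each other forever --- and then your (correct) observation that such equicontinuity forces $\psi_{xy}=\psi^*_{xy}$, ruling out DC1, DC2 and DC3, applies on this residual set. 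Finally, in the non-autonomous case you never verify that your approximating system is convergent with continuous limit; the paper secures this trivially by setting $g_n\equiv g$ for all $n\ge N$, where $g$ is the non-chaotic map furnished by \cite{darji}.
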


\begin{proof}
The proof basically follows from the proofs for zero topological entropy for {\it ads} in \cite{darji} and for {\it nads} in \cite{bss}. In Lemmas 3.2.1 and 3.2.4 due to Darji and D'Aniello in \cite{darji} it is proved that generic map (continuous and surjective) defined on a Cantor set has zero topological entropy. Authors consider map $f$ on the Cantor set and arbitrarily close to it they construct homeomorphism $h$ with zero topological entropy. The constructed homeomorphism behaves similarly to an adding machine on a Cantor set and it is easy to see that there is no distributional chaos. Pairs of points ``travel together" closed in one clopen set, and simultaneously they are constantly apart, trapped in smaller disjoint clopen sets. For more details we refer to \cite{darji}. 
\medskip

Like in \cite{bss} we extend the result to convergent non-autonomous system $f_{1, \infty}$. Let $f_n \to f$. Then $\forall \varepsilon ~~ \exists N ~~ \forall n>N, ~~\rho(f_n,f)<\frac{\varepsilon}{2}$. Using construction from \cite{darji} we find not DC1 map $g$, such that $\rho(g,f)<\frac{\varepsilon}{2}$ and finally let $g_n \equiv f_n$ for $n<N$ and $g_n \equiv g$ otherwise. This  ensures $\sup_n\rho(f_n, g_n)<\varepsilon$ with any metric $\rho$.
\end{proof}

Let $\mathcal{F}_{DC1}(I)$ denote the set of all convergent, DC1 chaotic systems on the interval. The open problem asked, whether to be DC1 is a generic property of {\it nads}. To prove genericity it would be enough (but not necessary) to show that $\mathcal{F}_{DC1}(I)$ is dense and open. In the following Lemma \ref{open} we show that set $\mathcal{F}_{DC1}(I)$, as well as its complement, set of all not DC1 chaotic systems, is neither open, nor closed. Note that Lemma \ref{open} does not say $\mathcal{F}_{DC1}(I)$ is not residual.  This problem stays open. 
\medskip

\begin{lemma}
The set $\mathcal{F}_{DC1}(I)$ is neither open, nor closed in $\mathcal{F}_p(I)$, as well as its complement. 
\label{open}
\end{lemma}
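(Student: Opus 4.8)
The plan is to reduce the four assertions to two. Recall that a set fails to be closed exactly when its complement fails to be open, and fails to be open exactly when its complement fails to be closed. Hence it suffices to produce \emph{(i)} one non-DC1 system lying in the closure of $\mathcal{F}_{DC1}(I)$ (this witnesses that $\mathcal{F}_{DC1}(I)$ is not closed, equivalently that its complement is not open), and \emph{(ii)} one system of $\mathcal{F}_{DC1}(I)$ that is a limit of non-DC1 systems (this witnesses that $\mathcal{F}_{DC1}(I)$ is not open, equivalently that its complement is not closed). Both witnesses can be chosen so that the argument is insensitive to the metric, exactly as in Theorem \ref{generic}.

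For \emph{(i)} I would take the identity system $\mathrm{Id}_{1,\infty}$, i.e. $f_n=\mathrm{id}_I$ for all $n$. Along every orbit the distance $d(f^j_{1,\infty}(x),f^j_{1,\infty}(y))=d(x,y)$ is constant, so $\psi_{xy}\equiv\psi^*_{xy}$ and no pair is DC1-scrambled; thus $\mathrm{Id}_{1,\infty}\notin\mathcal{F}_{DC1}(I)$. On the other hand $\mathrm{Id}_{1,\infty}\in\mathcal{F}_p(I)$, and by Theorem \ref{generic} the set $\mathcal{F}_{DC1}(I)$ is dense, so there are DC1 systems arbitrarily close to $\mathrm{Id}_{1,\infty}$ in $\rho_{\mathcal{F}}$. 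Hence $\mathrm{Id}_{1,\infty}$ lies in the closure of $\mathcal{F}_{DC1}(I)$ but not in it, giving \emph{(i)}.

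For \emph{(ii)} I would invoke the phenomenon already recalled in this section: by \cite{dvor} there is a DC1 {\it nads} $g_{1,\infty}$ on $I$ converging uniformly to a continuous, not DC1 map $f$, so $g_{1,\infty}\in\mathcal{F}_{DC1}(I)$. I then truncate, setting $g^{(k)}_n=g_n$ for $n\le k$ and $g^{(k)}_n=f$ for $n>k$. Each $g^{(k)}_{1,\infty}$ has constant tail $f$, hence converges to the continuous map $f$ and lies in $\mathcal{F}_p(I)$; moreover $\rho_{\mathcal{F}}(g^{(k)}_{1,\infty},g_{1,\infty})=\sup_{n>k}\rho(f,g_n)\to 0$, since $g_n\to f$ uniformly and uniform convergence forces $\rho$-convergence for the metrics in $\varrho_1\cup\varrho_2$. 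The only remaining point, and the step I expect to be the main obstacle, is to show that each $g^{(k)}_{1,\infty}$ is \emph{not} DC1.

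To settle this I would put $\Phi=g_k\circ\cdots\circ g_1$. For $j>k$ the $j$-th iterate of $g^{(k)}_{1,\infty}$ at $x$ equals $f^{j-k}(\Phi(x))$, so after deleting the first $k$ terms the orbit is the autonomous $f$-orbit of $\Phi(x)$; since a finite number of terms never changes a $\liminf$ or $\limsup$ density, the distributional functions of a pair $(x,y)$ for $g^{(k)}_{1,\infty}$ coincide with those of $(\Phi(x),\Phi(y))$ for the autonomous system $(I,f)$. If some uncountable $S$ were DC1-scrambled for $g^{(k)}_{1,\infty}$, then $\Phi$ would be injective on $S$ (two points with equal $\Phi$-image share a tail orbit, forcing $\psi_{xy}(t)=1$ for every $t>0$ and contradicting $\psi_{xy}(t)=0$ for some $t$), whence $\Phi(S)$ is uncountable and, by the coincidence of the distributional functions, DC1-scrambled for $(I,f)$, contradicting that $f$ is not DC1. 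Thus each $g^{(k)}_{1,\infty}$ is non-DC1, so $g_{1,\infty}$ is a limit of non-DC1 systems, giving \emph{(ii)}. Combining \emph{(i)} and \emph{(ii)} shows that $\mathcal{F}_{DC1}(I)$ and its complement are each neither open nor closed.
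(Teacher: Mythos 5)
Your proposal is correct, and it splits from the paper's own proof in exactly one of its two halves. Your part \emph{(ii)} is the paper's first construction: start from Dvo\v{r}\'akov\'a's DC1 system converging uniformly to a non-DC1 map $f$ and replace the tail by the constant sequence $f$. The paper, however, simply asserts that the truncated system ``is not DC1, because its tail is not DC1''; your injectivity argument (that $\Phi=g_k\circ\cdots\circ g_1$ must be injective on any DC1-scrambled set, so that $\Phi(S)$ is an uncountable scrambled set for the autonomous map $f$) is precisely the justification the paper omits, and it is a genuine improvement in rigor. Your part \emph{(i)} takes a different route: the paper starts from an arbitrary non-DC1 system, uses Kawaguchi's theorem \cite{kawaguchi} to find a DC1 map $g$ close to its limit, and replaces the tail by $g$; this in turn requires the (again unjustified in the paper) converse claim that a constant DC1 tail makes the whole system DC1, which needs a pullback of a scrambled set through the surjective initial block. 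You avoid both Kawaguchi and the pullback by exhibiting one concrete non-DC1 system (the identity) and citing the density already established in Theorem \ref{generic}: a dense proper subset cannot be closed. This is more economical and reuses the paper's own main theorem; the paper's route buys slightly more, namely that \emph{every} non-DC1 system is approximable by DC1 systems, independently of Theorem \ref{generic}. One small blemish: your side remark that uniform convergence forces $\rho$-convergence for every metric in $\varrho_1\cup\varrho_2$ (i.e. for \emph{all} metrics on $C_s(I)$) is not literally true, e.g. for a discrete metric; but this step is not needed, since $g_{1,\infty}\in\mathcal{F}_p(I)$ already means $\rho(g_n,f)\to 0$, which gives $\sup_{n>k}\rho(g_n,f)\to 0$ directly --- the same implicit convention on which the paper's own estimate $\rho_{\mathcal{F}}(f_{N_0,\infty},\{f\}_{i=N_0}^\infty)<\varepsilon$ rests.
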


\begin{proof}
A set is open, if every its point has a neighborhood lying in the set. First we show that for $\mathcal{F}_{DC1}(I)$ it is not possible. Let $\rho_{\mathcal{F}}$ now denote arbitrary metric on $\mathcal{F}(I)$. 
\medskip

Suppose $f_{1, \infty} \in \mathcal{F}_{DC1}(I)$ converging to limit map $f$, which is not DC1 chaotic. Such examples exist even for uniformly convergent systems (see \cite{dvor}). We construct, for arbitrary $\varepsilon>0$, system $g_{1, \infty}$ which is not DC1 chaotic and which is $\varepsilon$-close to $f_{1, \infty}$. We use notation $f_{N, \infty}$ to denote system $\{f_n\}_{n=N}^\infty$, starting from $N$ and $\{f\}_{i=1}^\infty$ to denote $\it nds$ consisting of only maps $f$.  Since $f_{1, \infty}$ converges to $f$, we can find for arbitrary $\varepsilon>0$ number $N_0 \in \mathbb{N}$ such that $\rho(f_{N_0, \infty}, \{f\}_{i=N_0}^\infty)<\varepsilon$. We construct $g_{1, \infty}$ as 

\begin{equation}
g_i = \left\{\begin{matrix}
f_i, & 1 \leq i <N_0 & \\ 
f, & N_0\leq i <\infty. & 
\end{matrix}\right.
\end{equation}

Then, 
\begin{equation}
\rho_{\mathcal{F}}(f_{1, \infty}, g_{1,\infty})<\rho_{\mathcal{F}}\left(\{f_i\}_{i=1}^{N_0-1}, \{g_i\}_{i=1}^{N_0-1}\right) + \rho_{\mathcal{F}}\left(\{f_i\}_{i=N_0}^{\infty}, \{g_i\}_{i=N_0}^{\infty}\right)<\varepsilon
\end{equation}

and $g_{1, \infty}$ is not DC1, because its tail is not DC1. Hence $\mathcal{F}_{DC1}(I)$ is not open. 
\medskip

Similarly, we can show, that complement of $\mathcal{F}_{DC1}(I)$, set $\mathcal{F}_p(I) \setminus \mathcal{F}_{DC1}$, is not open either. Consider $f_{1, \infty}$ which is not DC1 chaotic. As such, it must converge to DC1 non-chaotic map $f$. Kawaguchi result in \cite{kawaguchi} states, that the set of DC1 chaotic maps is dense in $C(I)$. Thus, for arbitrary $\varepsilon>0$ we can find DC1 chaotic map $g$, such that $\rho(g, f)<\frac{\varepsilon}{2}$. Similarly to the previous construction we set

\begin{equation}
g_i = \left\{\begin{matrix}
f_i, & 0 \leq i <N_0 & \\ 
g, & N_0\leq i <\infty, & 
\end{matrix}\right.
\end{equation}

where $N_0$ is chosen such that $\rho_{\mathcal{F}}(f_{N_0, \infty}, \{f\}_{i=N_0}^\infty)<\frac{\varepsilon}{2}$. Then $g_{1, \infty}$ is DC1 chaotic, because its tail is DC1 chaotic and we have

\begin{equation}
\rho_{\mathcal{F}}\left(f_{N_0, \infty}, \{g\}_{i=N_0}^\infty\right) \leq \rho_{\mathcal{F}}\left(f_{N_0, \infty}, \{f\}_{i=N_0}^\infty\right) + \rho_{\mathcal{F}}\left(\{f\}_{i=N_0}^{\infty}, \{g\}_{i=N_0}^{\infty}\right)<\frac{\varepsilon}{2} + \frac{\varepsilon}{2}=\varepsilon. \
\end{equation}

Consequently, 

\begin{equation}
\rho_{\mathcal{F}}(f_{1, \infty}, g_{1, \infty}) \leq \rho_{\mathcal{F}}\left(\{f_i\}_{i=1}^{N_0-1}, \{g_i\}_{i=1}^{N_0-1}\right) + \rho_{\mathcal{F}}\left(f_{N_0, \infty}, g_{N_0, \infty}\right)<\varepsilon. 
\end{equation}

Hence, none of the sets $\mathcal{F}_{DC1}(I)$ nor its complement is closed, which finishes the proof.
\end{proof}
\medskip

Similar construction can be used to show that the set of systems with infinite topological entropy is also neither open, nor closed. We can use {\it nads} with infinite topological entropy converging to limit map with zero topological entropy, for example the one from Figure \ref{picture1}. It implies that genericity of infinite topological entropy cannot be easily extended from uniformly convergent systems to $\mathcal{F}_p(I)$. 
\bigskip

To make the situation more clear, we summarize and compare known results on genericity of topologi\-cal entropy and distributional chaos for autonomous and non-autonomous systems on different spaces in the table below. $M$ denotes compact topological manifold, $I$ compact interval and $Q$ the Cantor set. Kawaguchi in \cite{kawaguchi} only proved density of DC1 systems, the openness is known only on $I$. The genericity of DC2(DC3) autonomous systems is proved in \cite{yano} and \cite{down}.\\
\smallskip

\begin{center}
\begin{tabular}{ |c|c| } 
\hline
{\bf autonomous systems} & {\bf non-autonomous systems} \\ 
\hline
\hline

\textbullet ~~ infinite topological entropy is generic in $C(M)$ & \textbullet ~~ infinite topological entropy is generic in $\mathcal{F}_p(I)$\\(Yano, \cite{yano}) & (Balibrea, Smítal, Štefánková, \cite{bss})\\
\textbullet ~~ zero topological entropy is generic in $C(Q)$ & \textbullet ~~ zero topological entropy is generic in $\mathcal{F}_p(Q)$\\ (Darji, D'Aniello, \cite{darji})& (Balibrea, Smítal, Štefánková, \cite{bss})\\
\hline
\textbullet ~~ DC1 (DC2, DC3) systems are dense in $C(M)$ & \textbullet ~~ DC1 (DC2, DC3) systems are dense in $\mathcal{F}_p(I)$\\
(Kawaguchi, \cite{kawaguchi})& (Theorem \ref{generic}) \\
\textbullet ~~ not DC1 systems are generic in $C(Q)$ & \textbullet ~~ not DC1 systems are generic in $\mathcal{F}_p(Q)$\\
(Lemma \ref{cantor}) & (Lemma \ref{cantor})\\
\hline
\end{tabular}
\end{center}
\bigskip


For completeness, in \cite{rouyer} it is proved that if $X$ is a generic compact metric space, then it is perfect and totally disconnected. Therefore it is a Cantor set.\\

\thebibliography{99}

\bibitem{alm} L. Alseda, J. Libre, M. Misiurewicz; Combinatorial Dynamics and Entropy in Dimension One. World Scientific, Singapore (1993).

\bibitem{paco} F. Balibrea, On problems of topological dynamics in non-autonomous discrete systems; Appl. math. nonlinear sci. 1(2) (2016) 391--404.

\bibitem{balopr} F. Balibrea, P. Oprocha; Weak mixing and chaos in nonautonomous discrete systems; Appl. Math. Lett. 25 (2012), 1135--1141.

\bibitem{bss} F. Balibrea, J. Smítal, M. Štefánková; On generic properties of nonautonomous dynamical systems;  Int. J. Bifurcat. Chaos 28 (2018), No.8, 1850102.

\bibitem{3versions} F. Balibrea, J. Smítal, M. Štefánková; The three versions of distributional chaos, Chaos Solit. Fractals 23 (2005), 5, 1581--1583. 

\bibitem{real} A. Bruckner, J. Bruckner, B. Thompson; Real Analysis, Prentice-Hall, 1997, xiv 713 pp. [ISBN 0-13-458886-X].

\bibitem{can} J. Cánovas; Li-Yorke chaos in a class of nonautonomous discrete systems; J. Differ. Equ. Appl. 17 (2011), No. 4, 479–486.

\bibitem{darji} U. Darji, E. D'Aniello; Chaos among self-maps of the Cantor space; J. Math. Anal. Appl. 381 (2011), Issue 2, 781--788.

\bibitem{down} T. Downarowicz; Positive topological entropy implies chaos DC2; Proc. Am. Math. Soc. 142 (2014), No.1, 137--149.

\bibitem{dvor} J. Dvořáková; Chaos in nonautonomous discrete dynamical systems; Commun. Nonlinear Sci. Numer. Simulat. 17 (2012) 4649–4652.

\bibitem{gar} A. Garfinkel, M. Spano, W. Ditto, J. Weiss; Controlling cardiac chaos; Science, 257 (1992), 1230--1235.

\bibitem{he} X. He, F. Westertoff; Bifurcations in flat-topped maps and the control of cardiac chaos; J. Econ. Dyn. Control 29(2005), 1577--1596.

\bibitem{hil} F. Hilker, F. Westeroff; Paradox of simple limiter control; Phys. Rev. E; 73 (2006), 052901.

\bibitem{kawaguchi} N. Kawaguchi; Distributionally chaotic maps are $C^0$-dense; Proc. Am. Math. Soc. 147 (2019), No. 12, 5339--5348.

\bibitem{kolyadasnoha} S. Kolyada, L'. Snoha; Topological entropy of nonautonomous dynamical systems; J. Differ. Equ. 268 (2020), 535--5365.

\bibitem{rouyer} J. Rouyer; Generic properties of compact metric spaces; Topol. Appl. 158 (2011), 2140--2147.

\bibitem{marta} M. Štefánková; Inheriting of chaos in uniformly convergent nonautonomous dynamical systems on the interval; Discrete. Cont. Dyn.-S. 36 (2016), Issue 6, 3435--3443. 

\bibitem{ruette} S. Ruette; Chaos on the interval, Vol. 67 of University Lecture Series, AMS, 2017. 

\bibitem{silva} L. Silva; Periodic attractors of non-autonomous flat-topped tent dynamical systems; Discrete Cont. Dyn. Syst. B, Vol. 24 (2018), 1867--1874.

\bibitem{yano} K. Yano;  A remark on the topological entropy of homeomorphisms; Inventiones math. 59 (1980), 215--220. 

\bigskip
\end{document}